\newcommand{\ee}{{\rm e}}
\newcommand{\bda}{\mathbf{a}}
\newcommand{\bdb}{\mathbf{b}}
\newcommand{\beq}{\begin{eqnarray}}
\newcommand{\eeq}{\end{eqnarray}}
\newcommand{\beqn}{\begin{eqnarray*}}
\newcommand{\eeqn}{\end{eqnarray*}}
\newcommand{\itg}{\int \limits}
\def\C{{\mathbb C}}
\newcommand{\Ree}{\mbox{\rm Re}\:}
\newcommand{\cD}{\mathcal D}
\newcommand{\cK}{\mathcal K}
\newcommand{\cM}{\mathcal M}
\newcommand{\cO}{\mathcal O}
\newcommand{\cP}{\mathcal P}
\newcommand{\cQ}{\mathcal Q}
\newcommand{\cF}{\mathcal F}
\newcommand{\bdx}{\mathbf{x}}
\newcommand{\bdy}{\mathbf{y}}
\newcommand{\bdm}{\mathbf{m}}
\newcommand{\bdh}{\mathbf{h}}
\newcommand{\bdk}{\mathbf{k}}
\DeclareMathOperator{\erfc}{erfc}
\DeclareMathOperator{\e}{e}
\def\R{{\mathbb R}}
\def\Z{{\mathbb Z}}
\newcommand\ds{\displaystyle}
\def\l{\lambda}
\def\b{{\beta}}
\def\a{{\alpha}}
\newtheorem{thm}{Theorem}[section]
\newtheorem{rmk}{Remark}[section]
\newcommand{\subjclass}{{\bf  Mathematics Subject Classification (2000). }}
\title{Fast cubature of volume potentials over rectangular domains}
\author{{  F. Lanzara$^{\mbox{\tiny 1}}$ , V. Maz'ya$^{\mbox{\tiny 2}}$ ,
G. Schmidt$^{\mbox{\tiny 3}}$}
}
\date{}
\begin{document}
\maketitle

\hspace*{1cm}
\parbox{10cm}{\begin{flushleft}
{\footnotesize\em
\begin{itemize}
\item[$^{\mbox{\tiny\rm 1}}$] Department of Mathematics, Sapienza University of Rome,\\
Piazzale Aldo Moro 2, 00185 Rome, Italy\\
\texttt{\rm lanzara\symbol{'100}mat.uniroma1.it}
\item[$^{\mbox{\tiny\rm 2}}$]Department of Mathematics, University of
Link\"oping, \\ 581 83 Link\"oping, Sweden;\\
Department of Mathematical Sciences, M\&O Building, University of
Liverpool, Liverpool L69 3BX, UK;\\
\texttt{\rm vlmaz\symbol{'100}mai.liu.se }
\item[$^{\mbox{\tiny\rm 3}}$]Weierstrass Institute for Applied Analysis and
Stochastics, \\  Mohrenstr. 39,
10117 Berlin, Germany \\
\texttt{\rm schmidt\symbol{'100}wias-berlin.de}
\end{itemize}
}
\end{flushleft}}

\begin{abstract}
In the present paper we study high-order cubature formulas for the computation of 
advection-diffusion potentials over boxes. By using the basis functions introduced 
in the theory of approximate approximations, the cubature of a potential is reduced 
to the quadrature of one dimensional integrals. For densities with separated approximation,
we derive a tensor product representation of the integral operator which
admits efficient cubature procedures in very high dimensions.
Numerical tests show that these formulas are accurate and provide 
approximation of order $\cO(h^6)$ up to dimension $10^8$.
\end{abstract}

{\bf Keywords.} 
Multi-dimensional convolution;
Advection-diffusion potential;
Tensor product representation;
Higher dimensions

\subjclass 65D32; 65-05; 41A30; 41A63.

\section{Introduction}
\setcounter{equation}{0}

High-dimensional volume potentials arise in many mathematical models  in the field of physics, chemistry, biology, financial mathematics and many others. In recent years, tensor product approximation has been recognized as a successful tool to overcome the "curse of dimensionality"  and treat high-dimensional integral operators as described, for example, in \cite{HK,HK2,Khor,BHK}.

In the present paper we propose to combine  high-order semi-analytic cubature formulas, obtained by using the method
of approximate approximations (see \cite {MSbook} and the reference therein), with tensor product approximations.  

Cubature formulas based on approximate approximations for volume potentials over $\R^n$ and over bounded domains have been considered in \cite{MS2} and \cite{LMS3}, respectively (see also \cite{MSbook}).  The cubature of  high-dimensional volume potentials over the full space and over half-spaces  has  been studied in \cite{LMS2} and \cite{LMS}. 
Now we consider the volume potential 
\begin{equation}\label{vol}
\cK_\l f (\bdx)=\int_{[P,Q]}\kappa_{\l}(\bdx-\bdy)f(\bdy)d\bdy,
\end{equation}
with the fundamental solution 
\begin{align*}
\kappa_\l(\bdx)=
\frac{1}{(2 \pi)^{n/2}}
\Big( \frac{|\bdx|}{\lambda}\Big)^{1-n/2}K_{n/2-1} (\lambda|\bdx|) \, ,\l\in\C\setminus(-\infty,0],
\end{align*}
over rectangular domains $[P,Q]=\prod_{j=1}^n [P_j,Q_j]\subset \R^n$.
Here $K_\nu$ is the modified Bessel
function of the second kind (see \cite[9.6, p.374]{AS}).

The function $u=\cK f$ provides a solution of the modified Helmholtz equation
\[
(-\Delta+\l^2) u=\left\{ \begin{array}{cc} f(\bdx)& \bdx\in [P,Q]\\ \\ 0& otherwise.
\end{array}\right.
\]
For $\l=0$, then
\begin{align*}
\kappa_0(\bdx)
=\left\{
\begin{array}{ll}
\ds \frac{1}{2\pi} \log \frac{1}{|\bdx|} \, , &n=2\, ,\\
\ds \frac{\Gamma(\frac{n}{2}-1)}
    {4 \pi^{n/2}} \frac{1}{|\bdx|^{n-2}}\, ,
&n\geq 3\, 
\end{array}
\right.
\end{align*}
is the fundamental solution of the Laplacian.

The theory of  approximate approximations proposes semi-analytic cubature formulas
for volume potentials by using
quasi-interpolation of the density $f$ by
functions for which the integral operator can be taken analytically.
Approximate quasi-interpolant has the form
\begin{equation*}\label{quasiint}
\cM_{h,\cD}{f}(\bdx)=\cD^{-n/2}
\sum_{\bdm\in\Z^n}{f}(h\bdm)\eta\left(\frac{\bdx-h\bdm}
{h\sqrt{\cD}}\right)
\end{equation*}
where $h$ and $\cD$ are positive parameters and  $\eta$ is a smooth and rapidly decaying function which satisfies the moment conditions of order $N$
\begin{equation}\label{moment}
\int_{\R^n}\eta(\bdx)\,\bdx^{\alpha}d\bdx=\delta_{0,\alpha},\quad 0\leq
|\alpha|<N.
\end{equation}

If $f\in C^N_0(\R^n)$,  it is known (\cite{MSbook}) that
\begin{equation*}
|{f}(\bdx)-\cM_{h,\cD}{f}(\bdx)| \le
c (\sqrt{\cD}h)^N \|\nabla_N f\|_{L^\infty}
+ \sum_{k=0}^{N-1} \varepsilon_k (\sqrt{\cD}h)^k \big|\nabla_k f(\bdx)\big|
\end{equation*}
with
\begin{align*}
\varepsilon_k \le\sum_{\bdm \in {\Z}^n \setminus \{0\}}
  \big| \nabla_k \cF\eta (\sqrt{\cD} \bdm)  \big| \, ; \lim_{\cD\to \infty}
\sum_{\bdm \in {\Z}^n \setminus \{0\}}
  \big| \nabla_k \cF\eta (\sqrt{\cD} \bdm)\big|=0.
\end{align*}
If we replace $f$ in \eqref{vol} by the quasi-interpolant 
\begin{equation}\label{quasiint2}
\cD^{-n/2}
\sum_{h\,\bdm\in [P,Q]}{f}(h\bdm)\eta\left(\frac{\bdx-h\bdm}
{h\sqrt{\cD}}\right)
\end{equation}
we don't obtain good approximations because  \eqref{quasiint2} approximates $f$ only in a subdomain of $[P,Q]$ with positive distance from the boundary.  To avoid this difficulty 
we extend $f$  with preserved
smoothness in a larger domain. Obviously  the quasi-interpolant of the continuation
  $\widetilde{f}$ approximates $f$ in $[P,Q]$.  Assume that
there exists $C>0$ such that
\[
||\widetilde{f}||_{W^N_\infty(\R^n)}\leq C\, ||f||_{W^N_\infty([P,Q])}.
\]
Since $\eta$ is a smooth and rapidly decaying function,
for any error $\epsilon >0$ one can fix $r>0$ and the parameter $\cD>0$ such
that the quasi-interpolant with nodes in a neighborhood of $[P,Q]$
\begin{equation*}\label{quasiintr} 
\cM^r_{h,\cD}\widetilde{f}(\bdx)=
\cD^{-n/2}\sum_{d(h\bdm,[P,Q])\leq
r\,h\sqrt{\cD}}
\widetilde{f}(h\bdm)\eta\left(\frac{\bdx-h\bdm}
{h\sqrt{\cD}}\right)
\end{equation*}
approximates $f$ with
\begin{equation}\label{estimate}
|{f}(\bdx)-\cM^r_{h,\cD}\widetilde{f}(\bdx)| = \cO((\sqrt{\cD}h)^N+ \epsilon)
\|f\|_{W^N_\infty}
\end{equation}
for all $\bdx \in [P,Q]$.

Then the integral 
\begin{equation*}\label{cub}
\cK_{\l,h}\widetilde{f}(\bdx)=\cK_\l(\cM^r_{h,\cD}\widetilde{f}) (\bdx)
=
\cD^{-n/2} \!\!\!\!\! \!\!\!\!\!\sum_{d(h\bdm,[P,Q])\leq
r\,h\sqrt{\cD}}\!\!\!\!\!
\widetilde{f}(h\bdm) \int_{[P,Q]}
\kappa_{\l}(\bdx-\bdy)\,
\eta\left(\!\frac{\bdy-h\bdm}{h\sqrt{\cD}}\right)d\bdy
\end{equation*}
 gives a cubature of \eqref{vol}.
 
Since $\cK_\l$ is a bounded mapping between suitable function spaces, the
differences
$\cK_{\l,h}\widetilde{f}(\bdx)-\cK_{\l}f(\bdx)$ behave like estimate
\eqref{estimate}.
Therefore,  to construct  high order cubature formulas for \eqref{vol}, it
remains to compute  the integrals
\[
  \int_{[P,Q]}  \kappa_{\l}(\frac{\bdx-h\bdm}{h\sqrt{\cD}}-\bdy)\,
\eta(\bdy)d\bdy
\]
for nodes with $d(h\bdm,[P,Q])\leq
r\,h\sqrt{\cD}$.
This is performed by using one-dimensional integral representations.
As basis functions we take the tensor products of univariate basis functions
\begin{align}\label{tensorbasis}
\widetilde{\eta}(\bdx)=\prod_{j=1}^{2M} \widetilde \eta_{2M}(x_j); \quad \widetilde \eta_{2M}(x_j) =
\pi^{-1/2} L_{M-1}^{(1/2)}(x_j^2) \e^{\, - x_j^2} 
\end{align}
which satisfies the moment condition \eqref{moment} of order $N=2M$ (cf. \cite{MSbook}), where  $L_{k}^{(\gamma)}$ are the generalized Laguerre polynomials
     \begin{equation*} \label{defLag}
L_{k}^{(\gamma)}(y)=\frac{\e^{\,y} y^{-\gamma}}{k!} \, \Big(
{\frac{d}{dy}}\Big)^{k} \!
\left(\e^{\,-y} y^{k+\gamma}\right), \quad \gamma > -{\rm 1} \, .
     \end{equation*}
     
Using the representation with a tensor product integrand
\begin{equation*}
\int_{[P,Q]} \cK_\l(\bdx-\bdy) \ee^{-|\bdy|^2}d \bdy=
\end{equation*}
\begin{equation}\label{1dint}
\frac{1}{4} \itg_0^\infty \e^{-   \l^2 t/4}  \prod_{j=1}^n
\frac{\e^{-x_j^2/(1+ t)}}{2 \sqrt{\pi}}
\Big(
\erfc \big(\sqrt{\frac{1+t}{t}} \Big( P_j -
  \frac{ x_j}{1+t} \Big) \big)-\erfc\big(\sqrt{\frac{1+t}{t}} \Big( Q_j-
  \frac{ x_j}{1+t}\Big)\big)
  \Big) dt
\end{equation}
we derive  a tensor product representation of the integral operator which admits efficient cubature procedures for densities with separated approximation (Section \ref{secone}).  We will consider quasi-interpolants  \eqref{quasiint_ani} on anisotropic grids which use different step size $h_j>0,j=1,...,n$ along different space dimensions.  If $h_j= \tau h$ , $0<\tau\leq 1$ the error of the quasi-interpolant  \eqref{quasiint_ani}  is always $\cO(h^N)$.
  In Section \ref{numres2} we provide numerical tests, showing that these formulas are accurate and provide approximation of order $\cO(h^6)$ up to dimension $10^8$.

\section{Higher order cubature formula based on \eqref{1dint}}\label{secone}
\setcounter{equation}{0}

In this section we describe a  high order cubature of $\cK_\l f$
  in the case of rectangular domain in $\R^n$. Let
\[
[P,Q]= \{\bdx=(x_1,\ldots,x_n): P_j\leq x_j \leq Q_j, j=1,...,n\}=\prod_{j=1}^n[P_j,Q_j] \, .
\]
As basis functions we use  \eqref{tensorbasis}.

In order to apply also quasi-interpolants
on rectangular grids $(h_1 m_1,\ldots,h_n m_n), h_j > 0$, shortly
denoted by $\{\bdh \bdm\}$,
\begin{equation}\label{quasiint_ani}
\cM_{h,\cD}\widetilde{f}(\bdx)=\cD^{-n/2}
\sum_{\bdm\in\Z^n}\widetilde{f}(\bdh \bdm)
\prod_{j=1}^n \widetilde\eta_{2M} 
\left(\frac{x_j-h_jm_j}{h_j\sqrt{\cD}}\right) \, ,
\end{equation}
we define the basis function
$\eta(\bdx)=\prod \widetilde \eta_{2M}(a_j x_j)$, $a_j > 0$,
and look for integral representations
of  the solution of
\begin{equation}\label{advecM}
(-\Delta
+\l^2) \, u = \prod_{j=1}^n \chi_{(p_j,q_j)}(x_j) \, \widetilde
\eta_{2M}(a_j x_j)\, .
\end{equation}
Here $\chi_{(p_j,q_j)}$ is the characteristic function of
the interval $(p_j,q_j)$ with
$-\infty \le p_j < q_j \le +\infty$,
$j=1,\ldots,n$. 
\begin{thm} \label{lem1} Let $\Ree \l^2\ge 0$ and $n \ge 3$.
The solution of
equation \eqref{advecM} in $\R^n$
can be expressed by the one-dimensional integral
\begin{align} \label{zwint}
u(\bdx)
=
\frac{1}{4} \itg_0^\infty \e^{-   \l^2 t/4}  \prod_{j=1}^n
\Big(\Phi_M(a_jx_j,a_j^2    t,a_jp_j) -\Phi_M(a_jx_j,a_j^2 
t,a_jq_j)\Big)  \, dt
\end{align}
where the function $\Phi_M$ is given by
\begin{equation*} \label{defPhiM}
\Phi_M(x,t,p)=\frac{\e^{-x^2/(1+ t)}}{2 \sqrt{\pi}}\Big(
{\erfc\big(F(t,x,p)\big)} \cP_M(t,x)- \frac{\e^{-F^2(t,x,p)}}{\sqrt{\pi}}
\cQ_M\big(t,x,p\big)\Big)
\end{equation*}
with the function
\begin{equation*} \label{defF}
F(t, x, y)  =
  \sqrt{\frac{1+t}{t}} \Big( y -
  \frac{ x}{1+t} \Big),
\end{equation*}
and $\cP_M,  \cQ_M$  are polynomials in $x$
of degree $2M-2$ and $2M-3$, respectively:
\begin{align*}
\cP_M(t,x)&=
\sum_{k=0}^{M-1} \frac{1}{(1+ t)^{k+1/2}}
L^{(-1/2)}_{k}\Big(\frac{x^2}{1+t}\Big) , \\
\cQ_M(t,x,y) &=2
\sum_{k=1}^{M-1} \frac{(-1)^k}   {k! \, 4^k}
  \sum_{\ell=1}^{2k}
\frac{(-1)^{\ell}}{t^{\ell/2}}
\bigg( H_{2k-\ell} (y)
  H_{\ell-1} \Big( \frac{y-x}{\sqrt{t}}\Big) \\
&\hskip60pt -
\Big(\hskip-4pt\begin{array}{c}2k\\\ell\end{array}\hskip-4pt\Big)
H_{2k-\ell}\Big(\frac{x}{\sqrt{1+t}}\Big)
\frac{H_{\ell-1}\big(F(t, x, y)\big)}{(1+t)^{k+1/2}}
\bigg).
\end{align*}
If $\Ree   \l^2 > 0$, then the representation
\eqref{zwint} is valid for all $n \ge 1$.

By $H_k$
we denote the Hermite polynomials
\begin{align} \label{defHerm}
H_k(x)=(-1)^k \e^{x^2}
\frac{d^k}{dx^k}
\e^{-x^2} \, .
\end{align}
\end{thm}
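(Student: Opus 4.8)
The plan is to construct the solution directly as the volume potential $u=\cK_\l f$ (in the sense of \eqref{vol}) with density $f(\bdy)=\prod_{j=1}^n\chi_{(p_j,q_j)}(y_j)\,\widetilde\eta_{2M}(a_jy_j)$, and then to evaluate the resulting convolution in closed form. The starting point is the subordination (Gaussian) representation of the fundamental solution,
\begin{equation*}
\kappa_\l(\bdx)=\frac14\itg_0^\infty \e^{-\l^2t/4}\,(\pi t)^{-n/2}\,\e^{-|\bdx|^2/t}\,dt ,
\end{equation*}
obtained from the resolvent identity $(-\Delta+\l^2)^{-1}=\itg_0^\infty\e^{-\tau(-\Delta+\l^2)}d\tau$ and the heat kernel $(4\pi\tau)^{-n/2}\e^{-|\bdx|^2/(4\tau)}$ after the substitution $\tau=t/4$; this reproduces the stated Bessel form of $\kappa_\l$. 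The factor $\e^{-\l^2t/4}$ controls the limit $t\to\infty$, so the representation converges for $\Ree\l^2>0$ and all $n\ge1$, while for $\Ree\l^2=0$ the algebraic factor $(\pi t)^{-n/2}$ forces $n\ge3$. Since both $\e^{-|\bdx|^2/t}$ and $f$ factor over the coordinates, substituting this into $u=\kappa_\l*f$ and interchanging the $t$--integral with the convolution (Fubini, justified by the above convergence) turns the $n$--dimensional integral into a product of one--dimensional integrals,
\begin{equation*}
u(\bdx)=\frac14\itg_0^\infty\e^{-\l^2t/4}\prod_{j=1}^n\Big((\pi t)^{-1/2}\itg_{p_j}^{q_j}\e^{-(x_j-y_j)^2/t}\,\widetilde\eta_{2M}(a_jy_j)\,dy_j\Big)\,dt .
\end{equation*}

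Next I would reduce each factor to a canonical one--dimensional integral. The substitution $w=a_jy_j$ absorbs the dilation $a_j$ and shows that the $j$--th factor equals $I(a_jx_j,a_j^2t,a_jp_j)-I(a_jx_j,a_j^2t,a_jq_j)$, where
\begin{equation*}
I(X,T,p)=(\pi T)^{-1/2}\itg_{p}^{\infty}\e^{-(X-y)^2/T}\,\widetilde\eta_{2M}(y)\,dy ,
\end{equation*}
the difference over the two endpoints recovering the integral over $(p_j,q_j)$ (and consistently, $p_j=-\infty$ sends the erfc to $2$ and kills the boundary term). Thus it remains to prove $I(X,T,p)=\Phi_M(X,T,p)$, i.e.\ to evaluate this single truncated Gaussian--moment integral and to identify it with $\Phi_M$.

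For the core computation I would complete the square in the exponent,
\begin{equation*}
\frac{(X-y)^2}{T}+y^2=\frac{X^2}{1+T}+\frac{1+T}{T}\Big(y-\frac{X}{1+T}\Big)^2 ,
\end{equation*}
which produces the prefactor $\e^{-X^2/(1+T)}$ and the shifted variable whose value at $y=p$ is exactly $F(T,X,p)=\sqrt{(1+T)/T}\,(p-X/(1+T))$. Inserting the Laguerre factor $L_{M-1}^{(1/2)}(y^2)$ of $\widetilde\eta_{2M}$ leaves a polynomial times a Gaussian; after expanding that polynomial in Hermite polynomials of the shifted variable, each term is integrated using
\begin{equation*}
\itg_{F}^{\infty}\e^{-z^2}\,dz=\frac{\sqrt\pi}{2}\,\erfc(F),\qquad
\itg_{F}^{\infty}H_m(z)\,\e^{-z^2}\,dz=H_{m-1}(F)\,\e^{-F^2}\quad(m\ge1),
\end{equation*}
the second following by integration by parts from the Rodrigues formula \eqref{defHerm}. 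The $H_0$--component yields the $\erfc(F)$ term and hence $\cP_M$, whereas every higher component contributes a boundary term $\e^{-F^2}\times(\text{polynomial})$ and hence $\cQ_M$; the degree count $2M-2$ for $\cP_M$ and $2M-3$ for $\cQ_M$ is then automatic.

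The main obstacle is purely combinatorial: showing that the collected coefficients are \emph{exactly} the stated polynomials, i.e.\ that the $\erfc$--coefficient resums to $\cP_M$ and that the boundary part resums to the double Hermite sum $\cQ_M$. The four Hermite arguments occurring there ($y$, $(y-x)/\sqrt t$, $x/\sqrt{1+t}$ and $F(t,x,y)$) indicate that one should first rewrite $L_{M-1}^{(1/2)}(y^2)$ through Hermite polynomials in the original variable $y$ and then apply the Hermite addition formula $H_n(a+b)=\sum_k\binom{n}{k}H_k(a)(2b)^{n-k}$ to handle the shift, the binomial $\binom{2k}{\ell}$ in $\cQ_M$ being its signature; I would tame this by induction on $M$, or by resumming via the Laguerre generating function $\sum_k L_k^{(1/2)}(s)z^k=(1-z)^{-3/2}\e^{-sz/(1-z)}$, rather than by brute expansion. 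Finally, reassembling the $n$ factors under the $t$--integral reproduces \eqref{zwint}; convergence at $t\to\infty$ is exactly the dichotomy $\Ree\l^2>0$ (any $n$) or $n\ge3$ (when $\Ree\l^2=0$) isolated above, while at $t\to0$ each factor tends to the bounded value $\chi_{(p_j,q_j)}(x_j)\,\widetilde\eta_{2M}(a_jx_j)$, so no difficulty arises there, and differentiating \eqref{zwint} under the integral confirms that $u$ solves \eqref{advecM}.
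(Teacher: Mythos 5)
Your reduction is sound and is, in substance, the paper's own: the authors obtain $u(\bdx)=\int_0^\infty w(\bdx,t)\,dt$ from the damped heat equation $\partial_t w-\Delta w+\l^2 w=0$ and apply Poisson's formula to $z=w\e^{\l^2 t}$, which is precisely your kernel-level subordination identity after Fubini; the factorization over coordinates, the absorption of $a_j$ by scaling, the completion of the square producing $F(t,x,p)$ and the prefactor $\e^{-x^2/(1+t)}$, and the convergence dichotomy ($\Ree\l^2>0$ for any $n\ge 1$ versus $n\ge 3$ when $\Ree\l^2=0$, each one-dimensional factor decaying like $t^{-1/2}$) all match the paper. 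Your two quadrature identities, $\int_F^\infty \e^{-z^2}dz=\tfrac{\sqrt\pi}{2}\erfc(F)$ and $\int_F^\infty H_m(z)\e^{-z^2}dz=H_{m-1}(F)\e^{-F^2}$, are also correct and are equivalent to the erfc-derivative formula the paper uses.

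However, there is a genuine gap, and it sits exactly where the theorem's actual content lies: you never prove $I(X,T,p)=\Phi_M(X,T,p)$ with the \emph{stated} polynomials $\cP_M$ and $\cQ_M$, and you concede as much by calling the coefficient bookkeeping ``the main obstacle'' and offering only candidate strategies (induction on $M$, the Laguerre generating function). Your specific plan — expand $L_{M-1}^{(1/2)}(y^2)$ in Hermite polynomials of the shifted variable $z$ — produces expansion coefficients that themselves depend on $X$ and $T$, and nothing in the sketch shows these resum to $\cQ_M$'s double sum with its two structurally different families of terms, $H_{2k-\ell}(p)\,H_{\ell-1}\big((p-x)/\sqrt t\,\big)$ on one hand and $\binom{2k}{\ell}H_{2k-\ell}\big(x/\sqrt{1+t}\big)H_{\ell-1}(F)\,(1+t)^{-k-1/2}$ on the other. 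The paper's device that makes this mechanical, and which you miss, is the representation (from the theory of approximate approximations)
\begin{equation*}
\widetilde\eta_{2M}(y)=\pi^{-1/2}\sum_{k=0}^{M-1}\frac{(-1)^k}{k!\,4^k}\,\frac{d^{2k}}{dy^{2k}}\,\e^{-y^2},
\end{equation*}
which reduces everything to $\varphi_k(x,t,p)=\int_p^\infty \e^{-(x-y)^2/t}\frac{d^{2k}}{dy^{2k}}\e^{-y^2}\,dy$. Integrating by parts $2k$ times and using that $\partial_y$ acting on the kernel equals $-\partial_x$, the fully integrated term is $\partial_x^{2k}\varphi_0(x,t,p)$ while the boundary evaluations at $y=p$ yield the first family of Hermite products; Leibniz differentiation of $\varphi_0=\tfrac{\sqrt{\pi t}}{2}(1+t)^{-1/2}\e^{-x^2/(1+t)}\erfc(F)$ then yields the $\erfc$ part — which resums to $\cP_M$ via $H_{2k}(x)=(-1)^k4^k k!\,L_k^{(-1/2)}(x^2)$ — together with the second family (the binomial $\binom{2k}{\ell}$ in $\cQ_M$ is Leibniz's, not the Hermite addition formula's as you conjecture). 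Until you carry out this computation, or successfully execute your generating-function alternative, the explicit formulas for $\cP_M$ and $\cQ_M$ — the claim that makes the theorem usable for the cubature in Section \ref{numres2} — remain unproved.
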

\begin{proof}
The solution of \eqref{advecM}
can be obtained  explicitly by
  using the parabolic equation
\begin{align}
&\partial_t w -   \Delta   w +  \l^2 w =0 \, , \quad t \ge 0 \, ,\label{theat}
\intertext{ with the initial condition}
&w(\bdx ,0) =
\prod_{j=1}^n \chi_{(p_j,q_j)}(x_j) \, \widetilde \eta_{2M}(a_j x_j) 
\, . \nonumber
\end{align}
Integrating \eqref{theat} in $t$ we derive
\begin{align*}
w(\bdx,T)-w(\bdx,0)-  (\Delta- \l^2)\itg_0^Tw(\bdx,t) \, dt =0 \, ,
\end{align*}
hence  the solution of \eqref{advecM} is
expressed as the one-dimensional integral
\begin{align*}
u(\bdx) = \itg_0^\infty w(\bdx,t) \, dt
\end{align*}
provided
it exists.
Obviously, if $w$ solves \eqref{theat},
then $z=w \e^{\,   \l^2 t}$ is the solution of
the initial value problem  for the heat equation
\begin{equation*}\label{theat1}
\partial_t z -   \Delta   z  =0 \, , \quad  z(\bdx ,0) =
\prod_{j=1}^n \chi_{(p_j,q_j)}(x_j) \, \widetilde \eta_{2M}(a_j x_j) \, ,
\end{equation*}
which has, by Poisson's formula, the solution
\begin{align*}
z(\bdx,t) & =\frac{1}{({4 \pi   t)^{n/2} }} \itg_{\prod (p_j,q_j)}
  {\e}^{-|\bdx-\bdy|^2/(4   t)}
\prod_{j=1}^n
\widetilde \eta_{2M}(a_j y_j)
\, d\bdy \, \\
& =  \prod_{j=1}^n \frac{1}{\pi^{1/2} (4  a_j^2    t)^{1/2}}\itg_{a_j 
p_j}^{a_j q_j}
\e^{-(a_j x_j- y_j)^2/(4 a_j^2    t)}\widetilde \eta_{2M}(y_j)
\, dy_j
\end{align*}
where $\prod (p_j,q_j)$ is the Cartesian product of the intervals
$(p_j,q_j)$.
Denoting
\begin{align*}
\Phi_M(x,t,p)=\frac{1}{\sqrt{\pi t}}\itg_{p}^{\infty}
\e^{-(x- y)^2/t}\widetilde \eta_{2M}(y)
\, dy
\end{align*}
we get the one-dimensional integral representation \eqref{zwint}
of the solution of \eqref{advecM},
provided  this integral exists.
Denoting
\begin{equation*} \label{defvarphi}
\varphi_k(x, t,p) = \itg_{p}^{\infty} {\e}^{-(x- y)^2/t}
  \, \frac{d^{2k}}{dy^{2k}}  \e^{-y^2} \, dy
   \end{equation*}
and using the general representation \cite[p.55]{MSbook}
\begin{equation*}\label{gen}
\eta_{2M}(\bdx)=\pi^{-n/2} \sum_{j=0}^{M-1} \frac{(-1)^j}{j!4^j}\Delta^j
\e^{-{|\bdx|^2}},
\end{equation*}
we have
\begin{align*}
\Phi_M(x,t,p)
=  \frac{1}{\pi \sqrt{t}}\sum_{k=0}^{M-1} \frac{(-1)^k}   {k! \, 4^k} \,
\varphi_k(x, t,p) \, .
\end{align*}
From
\begin{align*}
\varphi_{0}(x,t,p)=\itg_{p}^{\infty} \e^{-(x-y)^2/t} \e^{-y^2}    \,
dy=\frac{\sqrt{\pi}}{2}\sqrt{\frac{t}{1+t}}\e^{-x^2/(1+t)}
\erfc\big(F(t,x,p)\big) \, ,
\end{align*}
for $k \geq 1$, integration by parts   leads to
\begin{align*}
\varphi_k(x, t,p)=\frac{\partial^{2k}\varphi_{0}(x,t,p)}{\partial x^{2k}}
-\sum_{\ell=0}^{2k-1}(-1)^\ell
\frac{\partial^{\ell}}{\partial y^{\ell}}
\e^{-(x - y)^2/t}
\frac{d^{2k-\ell-1}}{d y^{2k-\ell-1}}\e^{-y^{2}}
\Bigg|_{y=p} 
\end{align*}
and the definition \eqref{defHerm}
gives
\begin{align*}
\frac{d^{2k-\ell-1}}{d y^{2k-\ell-1}}\e^{-y^{2}}
&=(-1)^{2k-\ell-1}
\e^{-y^{2}} H_{2k-\ell-1}(y) \, ,\\
 \frac{\partial^{\ell}}{\partial y^{\ell}}
\e^{-(x - y)^2/t}
&=\frac{(-1)^\ell\e^{-(x - y)^2/t}}{t^{\ell/2}} \,
H_\ell \Big( \frac{y-x}{\sqrt{t}}\Big) .
\end{align*}
In view of
\[
  \frac{d^{\ell}}{dx^{\ell}}\erfc(x)= \frac{2}{\sqrt{\pi}}(-1)^{\ell}
\e^{-x^2} H_{\ell-1}(x)
,\quad \ell \geq 1 \, ,
\]
one gets for $\ell<2k$
\begin{align*}
\frac{\partial^{2k-\ell}}{\partial x^{2k-\ell}} \erfc\big(F(t,x,p)\big)&=
\frac{(-1)^{2k-\ell}}{(t(1+t))^{k-\ell/2}}
\left[ \frac{d^{2k-\ell}}{dz^{2k-\ell}}\erfc(z)\right]_{z=F(t,x,p)}\\
&=\frac{2 \e^{-F^2(t,x,p)} }{\sqrt{\pi}(t(1+ t))^{k-\ell/2}} \,
H_{2k-\ell-1}(F(t,x,p)) \,.
\end{align*}
Therefore,  since
\[
\frac{d^\ell}{d x^\ell} \e^{-x^2/(1+t)}
=\frac{(-1)^\ell\e^{-x^2/(1+t)}}{(1+t)^{\ell/2}}
  H_\ell\Big(\frac{x}{\sqrt{1+t}}\Big)\, ,
\]
we obtain
\begin{align*}
\frac{\partial^{2k}}{\partial x^{2k}}& \, \varphi_{0}(x,t,p)
=  \frac{\sqrt{\pi t}}{2}\frac{\e^{-x^2/(1+t)}}{(1+ t)^{k+1/2}}
H_{2k}\Big(\frac{x}{\sqrt{1+t}}\Big)\erfc(F(t,x,p))\\
&\; -\frac{ \sqrt{t} \e^{-x^2/(1+t)} \e^{-F^2(t,x,p)}}{(1+ t)^{k+1/2}}
  \sum_{\ell=0}^{2k-1}
\Big(\hskip-4pt\begin{array}{c}2k\\\ell\end{array}\hskip-4pt\Big)
\frac{(-1)^{\ell}}{t^{k-\ell/2}}
H_\ell\Big(\frac{x}{\sqrt{1+t}}\Big)
H_{2k-\ell-1}(F(t,x,p)) \, .
\end{align*}
Thus simple transformations give
\begin{align}
 \varphi_{k}(x,t,p)=& \e^{-x^2/(1+t)} \bigg(\erfc\big(F(t,x,p)\big)
H_{2k}\Big(\frac{x}{\sqrt{1+t}}\Big)\frac{\sqrt{\pi\, t}}{2(1+t)^{k+1/2}}\nonumber
\\
&+\e^{-F^2(t,x,p)}
  \sum_{\ell=1}^{2k}\frac{(-1)^{\ell}}{t^{(\ell-1)/2}}\nonumber \\
&\quad \times\bigg( \! \Big(\hskip-3pt\begin{array}{c}2k\\\ell\end{array}\hskip-3pt\Big)
H_{2k-\ell}\Big(\frac{x}{\sqrt{1+t}}\Big)
\frac{H_{\ell-1}(F(t,x,p))}{(1+t)^{k+1/2}}
-H_{\ell-1}\Big(\frac{p-x}{\sqrt{t}}\Big)
H_{2k-\ell}(p) \!\bigg)\bigg).\nonumber
\end{align}
Using the relation
$H_{2k}(x)=(-1)^k \, 4^k\, k!\, L_k^{(-1/2)}(x^2)$
we find therefore
\begin{align*}
\Phi_M(x,t,p)&=\frac{\e^{-x^2/(1+t)} \erfc\big(F(t,x,p)\big)}{2 \sqrt{\pi}}
\sum_{k=0}^{M-1} \frac{1}{(1+t)^{k+1/2}}
L^{(-1/2)}_{k}\Big(\frac{x^2}{1+t}\Big)\\
&\quad 
+\frac{\e^{-x^2/(1+t)}\e^{-F^2(t,x,p)}}{\pi}\sum_{k=0}^{M-1}\frac{(-1)^k} 
{k! \, 4^k}
  \sum_{\ell=1}^{2k}\frac{(-1)^{\ell}}{t^{\ell/2}} \\
&\quad \times
\bigg( \! \Big(\hskip-3pt\begin{array}{c}2k\\\ell\end{array}\hskip-3pt\Big)
H_{2k-\ell}\Big(\frac{x}{\sqrt{1+t}}\Big)
\frac{H_{\ell-1}\big(F(t,x,p)\big)}{(1+t)^{k+1/2}}
-H_{\ell-1}\Big(\frac{p-x}{\sqrt{t}}\Big)
H_{2k-\ell}(p) \!\bigg) \\
  &=\frac{\e^{-x^2/(1+ t)}}{2 \sqrt{\pi}}\Big(
  \erfc\big(F(t,x_,p)\big) \cP_M(t,x)- \frac{\e^{-F^2(t,x,p)}}{\sqrt{\pi}}
  \cQ_M\big(t,x,p\big)\Big) .
\end{align*}
\end{proof}

The polynomials $\cP_M(t,x)$ and $\cQ_M(t,x,p)$
for $M=1,2,3$ are given by
\begin{align*}
&\cP_1(t,x)=\frac{1}{(1+t)^{1/2}} \, , \qquad
\cP_2(t,x)=\cP_1(t,x)+\frac{1}{2(1+t)^{3/2}}-\frac{x^2}{(1+t)^{5/2}}\, ,\\
&\cP_3(t,x)=\cP_2(t,x)+\frac{3}{8(1+t)^{5/2}}-
\frac{3\, x^2}{2(1+t)^{7/2}}+\frac{x^4}{2(1+t)^{9/2}}\, ,\\
&\cQ_1(t,x,p)=0,\qquad \cQ_2(t,x,p)=\frac{\sqrt{t}}{(1+t)} 
\Big(\frac{x}{1+t} +p\Big),
   \\
&\cQ_3(t,x,p)=-\frac{\sqrt{t}}{4(1+t)} \Big(
\frac{2 x^3}{(1+t)^3}+\frac{2 px^2 -5x}{(1+t)^2}
+\frac{(2p^2 -5)x-3p}{1+t}+p(2p^2-7)
\Big).
\end{align*}
\begin{rmk} \label{remark1}
Since for positive $r$
\[
0 < \erfc (r) \le \e^{-r^2}
  \quad \mbox{and}
\quad 2- \e^{-r^2} < \erfc (-r) < 2
\]
from the relation
\[
F^2(t,x,p) = p^2+\frac{(x-p)^2}{t} - \frac{x^2}{1+t}
\]
we get
\begin{align*}
|\e^{-x^2/(1+ t)}\erfc\big(F(t,x,p)\big)|\le \e^{-p^2} \quad \mbox{if}
\quad p > 0
  \end{align*}
and
\begin{align*}
|\e^{-x^2/(1+ t)}\erfc\big(F(t,x,p)\big)-2\e^{-x^2/(1+ t)} | < 
\e^{-p^2}   \quad \mbox{if}
\quad p <0 \, .
  \end{align*}
Thus for sufficiently large $|p|$
\begin{align*}
\Phi_M(x,t,p) = \left\{
\begin{array}{cc}
\pi^{-1/2} \e^{-x^2/(1+ t)}
\cP_M(t,x) + \cO(\e^{-p^2}) \, & \mbox{if } p  < 0 \, ,\\
\cO(\e^{-p^2}) \, & \mbox{if } p  > 0\, ,
\end{array} \right.
\end{align*}
and therefore, for sufficiently large $r$
one can use the approximation
\begin{align*}
\Phi_M(x,t,p) -\Phi_M(x,t,q)
\approx
\left\{
\begin{array}{cc}
0 \, ,& p,q  \ge r \mbox{ or } p,q  \le -r \, ,\\
\pi^{-1/2}\e^{-x^2/(1+ t)}
\cP_M(t,x) , &p  \le -r \mbox{ and } q  \ge r \, ,
\end{array} \right.
\end{align*}
with the error $\cO(\e^{-r^2})$.
Similarly, if $q-p \ge 2r$, then
\begin{align*}
\Phi_M(x,t,p) -\Phi_M(x,t,q)
\approx
\left\{
\begin{array}{cc}
\Phi_M(x,t,p) \, ,& -r < p  < r \, ,\\
\pi^{-1/2}\ \e^{-x^2/(1+ t)}
\cP_M(t,x)-\Phi_M(x,t,q) , &   -r < q  < r  \, .
\end{array} \right.
\end{align*}

\end{rmk}

\section{Implementation and numerical results} \label{numres2}
\setcounter{equation}{0}

We compute the  cubature formula
\begin{align*}
\cK_{\l,\bdh}\widetilde{f}(\bdx)=
\cD^{-n/2}
\sum_{\bdh\bdm\in\widetilde\Omega_{r \bdh}}\widetilde{f}(\bdh \bdm)
\itg_{{[P,Q]}}\kappa_\l(\bdx-\bdy)
\prod_{j=1}^n \widetilde\eta_{2M} 
\left(\frac{y_j-h_jm_j}{h_j\sqrt{\cD}}\right) \, d \bdy
\end{align*}
where $\widetilde\Omega_{r \bdh}=\prod_{j=1}^n(P_j-rh_j\sqrt{\cD},Q_j+rh_j\sqrt{\cD})$,
using   the tensor product representation of Theorem  \ref{lem1}.
At the grid points $\bdh \bdk=(h_1 k_1,\ldots,h_n k_n)$
we obtain
\begin{align*}
&\itg_{[P,Q]}\kappa_\l(\bdh \bdk-\bdy)
\prod_{j=1}^n \widetilde\eta_{2M} 
\left(\frac{y_j-h_jm_j}{h_j\sqrt{\cD}}\right) \, d \bdy 
=
\frac{1}{4}  \itg_0^\infty \e^{-   \l^2 t/4}  \\
&\quad \times \prod_{j=1}^n
\Big(\Phi_M\big(\frac{k_j-m_j}{\sqrt{\cD}},
\frac{   t}{h_j^2 \cD},
\frac{P_j-h_jm_j}{h_j\sqrt{\cD}}\big) -
\Phi_M\big(\frac{k_j-m_j}{\sqrt{\cD}},
\frac{   t}{h_j^2 \cD},
\frac{Q_j-h_jm_j}{h_j\sqrt{\cD}}\big)\Big)  \, dt
\end{align*}
and therefore
\begin{align} \label{defquadbox}
\cK_{\l,\bdh}\widetilde{f}(\bdx)=
\sum_{\bdh\bdm\in\widetilde\Omega_{r \bdh}}\widetilde{f}(\bdh \bdm)
\, \bdb_{\bdk,\bdm}^{(M)} \, ,
\end{align}
where we introduce the one-dimensional integral
\begin{align} \label{defbdb}
\bdb^{(M)}_{\bdk,\bdm} = \frac{1}{4 \cD^{n/2}}  \itg_0^\infty \e^{-   \l^2 t/4}
\prod_{j=1}^n \Big(b^j_{k_j,m_j}(P_j)-b^j_{k_j,m_j}(Q_j)\Big) \, dt
\end{align}
and use the abbreviation
\begin{align*}
b^j_{k,m}(P)=\e^{-(k-m)^2/(\cD(1+ t))}\bigg(
\erfc\big(F(\frac{t}{h_j^2 
\cD},\frac{k-m}{\sqrt{\cD}},\frac{P-h_jm}{h_j\sqrt{\cD}})
\big) \cP_M\big(\frac{t}{h_j^2 \cD},\frac{k-m}{\sqrt{\cD}}\big)\\
 - \pi^{-1/2}\exp\Big(-F^2\big(\frac{t}{h_j^2 \cD},
\frac{k-m}{\sqrt{\cD}},\frac{P-h_jm}{h_j\sqrt{\cD}}\big)\Big)
\cQ_M\big(\frac{t}{h_j^2 
\cD},\frac{k-m}{\sqrt{\cD}},\frac{P-h_jm}{h_j\sqrt{\cD}}
\Big)\bigg)/(2\sqrt{\pi}).
\end{align*}
According to Remark \ref{remark1}, for appropriately chosen $r> 0$
we can set within a given accuracy
\begin{align*}
&b^j_{k,m}(P) = a_{k-m}^j
= \pi^{-1/2}\e^{-(k-m)^2/( \cD(1+   t))}
\cP_M\big(\frac{t}{h_j^2 \cD},\frac{k-m}{\sqrt{\cD}}\big) &\mbox{if} \;
P-h_jm \le  -r h_j \sqrt{\cD} \, , \\
&b^j_{k,m}(P)=0 &\mbox{if} \hskip5pt   P-h_jm \ge r h_j \sqrt{\cD}\, 
,\hskip5pt
\end{align*}
which speeds up the computation of \eqref{defbdb}.
In particular,
we can split \eqref{defquadbox}
into
\begin{equation}\label{defquadbox2}
\cK_{\l,\bdh}^{(M)} f(\bdh\bdk)=
\sum_{\bdh\bdm \in\Omega_{r \bdh}}
\hspace{-5pt}{f}(\bdh\bdm)\,\bda_{\bdk-\bdm}^{(M)} +
\sum_{\bdh\bdm \in
\widetilde\Omega_{r \bdh}\setminus \Omega_{r \bdh} }
\hspace{-5pt}\widetilde{f}(\bdh\bdm)\bdb_{\bdk,\bdm}^{(M)}\, ,
\end{equation}
where
$\Omega_{r \bdh}=\prod_{j=1}^n(P_j+rh_j\sqrt{\cD},Q_j-rh_j\sqrt{\cD})$,
and the coefficients in the convolutional sum are given by
\begin{align*}
\bda^{(M)}_{\bdk} &= \frac{1}{4 \cD^{n/2}}  \itg_0^\infty \e^{-   \l^2 t/4}
\prod_{j=1}^n a_{k_j}^j  \, dt \\
&=
\frac{1}{4 (\pi \cD)^{n/2}}  \itg_0^\infty \e^{-   \l^2 t/4}
\e^{-|\bdk|^2/( \cD(1+ t))}\prod_{j=1}^n
\cP_M\big(\frac{t}{h_j^2 \cD},\frac{k_j}{\sqrt{\cD}}\big) \, dt \, .
\end{align*}

Following \cite{WALD} the one-dimensional integrals  of
$\bda_{\bdk}^{(M)}$ and $\bdb_{\bdk,\bdm}^{(M)}$ are transformed
to integrals over $\R$ with
integrands
decaying doubly exponentially
by making the substitutions
\begin{equation}\label{wald}
t=\e^\xi,\quad \xi=\alpha (\sigma+\e^\sigma),\quad \sigma=\beta (u-\e^{-u})
\end{equation}
with certain positive constants $\alpha,\beta$, and the computation is based on
the classical trapezoidal rule.
Then the tensor product structure of the integrands allows the efficient
computation of the coefficients $\bdb_{\bdk,\bdm}^{(M)}$ and
$\bda_{\bdk}^{(M)}$.
Moreover, the computation of  the convolutional sum is very
efficient
for integrands, which allow a separated representation, i.e.,
for given accuracy $\epsilon$
they can be represented as a sum of products of vectors in dimension~$1$
\begin{align*}
&f(h_1m_1,\ldots,h_mm_n) = \sum_{p =1}^R r_p \prod_{j=1}^{n}
f_j^{(p)}(h_jm_j) + \cO(\epsilon) \, .
\end{align*}

In \cite{LMS2}
we have described this approach
to the fast computation of high dimensional volume potentials
for compactly supported integrands.
To compute the convolutional sum
\begin{align*}
\sum_{\bdh\bdm \in\Omega_{r \bdh}}
\hspace{-5pt}\bda_{\bdk-\bdm}^{(M)} \, {f}(\bdh\bdm)
\end{align*}
we get after the substitutions
\begin{align*}
\bda^{(M)}_{\bdk} =
\frac{1}{4 (\pi \cD)^{n/2}}  \int_{-\infty}^\infty  \e^{-   \l^2 \Phi(u)/4}
\e^{-|\bdk|^2/( \cD(1+ \Phi(u))}\prod_{j=1}^n
\cP_M\big(\frac{\Phi(u)}{h_j^2 \cD},\frac{k_j}{\sqrt{\cD}}\big) 
\,\Phi'(u) \,  du \, ,
\end{align*}
where we set
\begin{align*}
\Phi(u)&=\exp(\alpha \beta (u-\exp(-u)) +\alpha \exp(\beta(u-\exp(-u)))),
\\
\Phi'(u)&=\Phi(u) \alpha \beta (1+\e^{-u})(1+\exp(\beta(u-\exp(-u)))).
\end{align*}
The quadrature with the  trapezoidal
rule with step size $\tau$
\begin{align*}
\bda_{\bdk}^{(M)}\approx \frac{\tau}{4(\pi \cD)^{n/2} } \sum_{s=-N_0}^{N_1}
\e^{- \l^2 \Phi(s\,\tau)/4}\e^{-|\bdk|^2/( \cD(1+   \Phi(s\,\tau)))}
\prod_{j=1}^n\cP_M\big(\frac{\Phi(s\,\tau)}{h_j^2 
\cD},\frac{k}{\sqrt{\cD}}\big)
\Phi'(s\,\tau)
\end{align*}
provides the approximation via one-dimensional discrete convolutions
\begin{align*}
\sum_{\bdh\bdm \in\Omega_{r \bdh}}
\bda_{\bdk-\bdm} \, f(\bdh\bdm)  \approx \frac{\tau}{4(\pi 
\cD)^{n/2} } \sum_{p=1}^R r_p
\sum_{s=-N_0}^{N_1} \e^{- \l^2 \Phi(s\,\tau)/4}
\Phi'(s\,\tau)\\
\times
\prod_{j=1}^{n}\sum_{m_j}
\e^{-(k_j-m_j)^2/( \cD(1+   \Phi(s\,\tau)))}
P_M\big(\frac{\Phi(s\,\tau)}{h_j^2 \cD},\frac{k_j -m_j}{\sqrt{\cD}}\big)
f_j^{(p)}(h_jm_j)\, .
\end{align*}

\bigskip

We provide some numerical tests to the approximation of the potential $\cK_\l f$ over the cube $[-1,1]^n$, $n\geq 3$, with the density 
\begin{equation}\label{product}
f(\bdx)=(-\Delta +\lambda^2) \prod_{j=1}^n u(x_j)=\sum_{p =1}^{n} \prod_{j=1}^{n}
f_j^{(p)}(x_j)  ,\> \bdx=(x_1,...,x_n)\in[-1,1]^n;
\end{equation}
\[
f_j^{(p)}(x)=u(x) \quad\hbox{ if} \quad j\neq p; \>f_j^{(p)}(x)=-u''(x)+\frac{\l^2}{n}u(x)\quad\hbox{ if}\quad \> j=p.
\]
Let $\widetilde{f}_j^{(p)}$ be an extension of $f_j^{(p)}$ outside the interval $[-1,1]$ with preserved smoothness and
\begin{equation*}\label{product2}
\widetilde{f}(\bdx)=\sum_{p =1}^{n} \prod_{j=1}^{n}
\widetilde{f}_j^{(p)}(x_j),\> \bdx\in \R^n.
\end{equation*}
By using  Hestenes reflection principle (\cite{He}) we construct an extension of  $f_j^{(p)}$ outside the interval $[-1,1]$ as 
\begin{equation*}\label{ext}
\widetilde{f}_j^{(p)}(x)=
\left\{
\begin{tabular}{cc}
$\ds\sum_{s=1}^{N+1} c_s f_j^{(p)}(-a_s\,(x+1)-1)$,& $x<-1$\\[1mm]
$f_j^{(p)}(x)$,& $-1\leq x \leq 1$\\[1mm]
$\ds\sum_{s=1}^{N+1} c_s f_j^{(p)}(-a_s\,(x-1)+1)$,& $x>1$\\[1mm]
$$
\end{tabular}
\right.
\end{equation*}
where $a_1,...,a_{N+1}$ are different  positive constants and the coefficients 
$\mathbf{c}_N=\{c_1,...,c_{N+1}\}$ are the unique solution of  the $(N+1)\times(N+1)$
system of linear equations
\begin{equation*}\label{sis}
\sum_{s=1}^{N+1} c_s (- a_s)^{k}=1,\quad k=0,...,N.
\end{equation*}
We provide results for  $\widetilde{f}_j^{(p)}=f_j^{(p)}$ and three different Hestenes extensions  corresponding to $a_s=2^{-s}$ (Extension 1) , $a_s=s^{-1}$ (Extension 2), $a_s=s$ (Extension 3).

 The approximation values are computed  by the cubature formula 
\eqref{defquadbox2}  for  $h_j=h$, $j=1,...,n$.
To have the saturation error comparable with the double precision rounding errors,
 we have chosen the parameter $\cD=4$.

In Tables \ref{table1}, \ref{table2}  and \ref{table3} we report on the absolute error and the approximation rate for the three-dimensional potential $\cK_\lambda f $,  when
$u(x)=\cos^2(\pi x/2)$ (Table  \ref{table1}),
$u(x)=(x^2-1)^3$ (Table  \ref{table2}) and
$u(x)=(x^2-1)^2$ (Table  \ref{table3}), in the case  $\lambda^2=1$ and $\lambda^2=1+i$.
 We have chosen the parameters $\a=2, \b=2$ in the transformations \eqref{wald} and
 $\tau=0.005$, $N_1=-N_0=300$ in the quadrature formula.
 The numerical results confirm the $h^2-$, $h^4-$ and, respectively, $h^6-$ convergence of the cubature formulas
 \eqref{defquadbox2} when $M=1,2,3$. For extensions 1, 2 and 3 the numerical results are similar with those if using $\widetilde{f}_j^{(p)}=f_j^{(p)}$. In Table \ref{table3} we see that the error of the approximate quasi-interpolant of order $6$ has reached the saturation bound. This is a feature of the method that approximate quasi-interpolant of order $N$ reproduces polynomials of degree $<N$ up to the saturation error.
 
To check the effectiveness of the method for very high dimension $n$ we computed the potential over $[-1,1]^n$ of the density \eqref{product} with $u(x)=1-\sin(\pi x^2/2)$ (Table \ref{table4}) and $u(x)=\e^x (1-x^2)^2$ (Table \ref{table5})  in dimension $n=10^i$, $i=1,..,8$ and different extensions.
We have chosen $a=6$, $b=5$, $\tau=0.003$, $N_0=-40$, $N_1=200$.
The results show that  $\cK_{\l,h}^{(3)}$  approximates with the predicted approximation rate $6$, also for very large $n$ and the error scales linearly in the space dimension.

\begin{table}[h]
\begin{scriptsize}
\begin{center}
\begin{tabular}{c|c|cc|cc|cc}
\multicolumn{1}{c}{$\lambda^2=1$:}&  \multicolumn{6}{c}{}\\
&& \multicolumn{2}{c|}{$M=1$}&\multicolumn{2}{c|}{$M=2$}&\multicolumn{2}{c}{$M=3$}\\
\hline
$\widetilde{f}(\bdx)$ &$h^{-1}$& error & rate&  error & rate & error & rate
\\
\hline
& 10      &0.822E-01    &           &0.414E-02    &          &  0.135E-03&          \\
& 20      & 0.219E-01 &   1.9062 & 0.272E-03 &   3.9267    & 0.223E-05 &   5.9201     \\
$f(\bdx)$& 40      & 0.557E-02 &   1.9760  &   0.172E-04 &   3.9821 &  0.354E-07 &   5.9800     \\
 & 80     &  0.140E-02 &   1.9940   & 0.108E-05 &   3.9955  & 0.555E-09 &   5.9950    \\
& 160     &    0.350E-03 &   1.9985 & 0.675E-07 &   3.9989 &   0.867E-11 &   5.9987     \\
& 320    & 0.875E-04 &   1.9996      & 0.422E-08 &   3.9997  & 0.136E-12 &   5.9994      \\
\hline
& 10      &  0.821E-01          &   & 0.413E-02         &  & 0.135E-03&        \\
& 20      &0.219E-01 &   1.9057    &0.272E-03 &   3.9265    &  0.223E-05 &   5.9201     \\
ext 1& 40      &   0.557E-02 &   1.9760 &0.172E-04 &   3.9820   &0.354E-07 &   5.9800    \\
 & 80     &  0.140E-02 &   1.9940&0.108E-05 &   3.9955   & 0.554E-09 &   5.9961    \\
& 160     & 0.350E-03 &   1.9985  &  0.675E-07 &   3.9989    & 0.825E-11 &   6.0692  \\
& 320    &0.875E-04 &   1.9996  & 0.422E-08 &   3.9997   & 0.789E-12 &   3.3868\\
\hline
& 10      &   0.826E-01 &           &  0.422E-02  &          &  0.140E-03&      \\
& 20      & 0.219E-01 &   1.9138     & 0.273E-03 &   3.9520    & 0.224E-05 &   5.9686 \\
ext 2& 40      & 0.557E-02 &   1.9769     & 0.172E-04 &   3.9850 &0.354E-07 &   5.9856  \\
 & 80     &  0.140E-02 &   1.9941  &0.108E-05 &   3.9959  &0.554E-09 &   5.9967 \\
& 160     & 0.350E-03 &   1.9985   &  0.675E-07 &   3.9989 & 0.883E-11 &   5.9718   \\
& 320    & 0.875E-04 &   1.9996     & 0.422E-08 &   3.9997&  0.120E-12 &   6.1971 \\
\hline
& 10      &   0.946E-01 &           & 0.139E-01   &          &  0.260E-01&       \\
& 20      &    0.224E-01 &   2.0769   & 0.771E-03 &   4.1771   &  0.871E-04 &   8.2194 \\
ext 3& 40      &  0.559E-02 &   2.0047    & 0.228E-04 &   5.0788 &0.111E-05 &   6.2957   \\
 & 80     & 0.140E-02 &   1.9977  & 0.113E-05 &   4.3396&0.341E-08 &   8.3438 \\
& 160     & 0.350E-03 &   1.9990 &  0.679E-07 &   4.0529   &0.147E-10 &   7.8633 \\
& 320    &0.875E-04 &   1.9997      &0.422E-08 &   4.0067 & 0.147E-12 &   6.6382\\
\hline
\end{tabular}
\medskip

\begin{tabular}{c|c|cc|cc|cc}
\multicolumn{1}{c}{$\lambda^2=1+i$:}&  \multicolumn{6}{c}{}\\
&& \multicolumn{2}{c|}{$M=1$}&\multicolumn{2}{c|}{$M=2$}&\multicolumn{2}{c}{$M=3$}\\
\hline
$\widetilde{f}(\bdx)$ &$h^{-1}$& error & rate&  error & rate & error & rate
\\
\hline
& 10      &  0.815E-01   &           &  0.410E-02  &          &0.134E-03   &          \\
& 20      & 0.217E-01 &   1.9060   & 0.270E-03 &   3.9267 &  0.221E-05 &   5.9201    \\
$f(\bdx)$& 40      & 0.553E-02 &   1.9760    &   0.171E-04 &   3.9821   & 0.351E-07 &   5.9800   \\
 & 80     & 0.139E-02 &   1.9940& 0.107E-05 &   3.9955       &  0.550E-09 &   5.9950 \\
& 160     & 0.347E-03 &   1.9985  & 0.669E-07 &   3.9989&  0.860E-11 &   5.9987  \\
& 320    &0.868E-04 &   1.9996 & 0.418E-08 &   3.9997 &0.135E-12 &   5.9974  \\
\hline
& 10      &    0.814E-01 &           &  0.410E-02  &          &0.134E-03  &        \\
& 20      & 0.217E-01 &   1.9055   & 0.270E-03 &   3.9265  &0.221E-05 &   5.9201\\
ext 1& 40      &  0.553E-02 &   1.9759   & 0.171E-04 &   3.9820   & 0.351E-07 &   5.9800    \\
 & 80     &  0.139E-02 &   1.9940&  0.107E-05 &   3.9955    &  0.550E-09 &   5.9959  \\
& 160     &   0.347E-03 &   1.9985 & 0.669E-07 &   3.9989  &   0.826E-11 &   6.0555   \\
& 320    & 0.868E-04 &   1.9996    &    0.418E-08 &   3.9997 &  0.710E-12 &   3.5419 \\
\hline
& 10      &  0.819E-01   &           & 0.417E-02   &          & 0.139E-03 &      \\
& 20      & 0.218E-01 &   1.9127 &  0.270E-03 &   3.9490   &  0.222E-05 &   5.9631   \\
ext 2& 40      &   0.553E-02 &   1.9768& 0.171E-04 &   3.9846  & 0.351E-07 &   5.9849   \\
 & 80     &0.139E-02 &   1.9941 &   0.107E-05 &   3.9959  & 0.550E-09 &   5.9964\\
& 160     &  0.347E-03 &   1.9985  & 0.669E-07 &   3.9989 & 0.873E-11 &   5.9767   \\
& 320    & 0.868E-04 &   1.9996 &  0.418E-08 &   3.9997 &  0.122E-12 &   6.1594 \\
\hline
& 10      &  0.924E-01  &           &  0.130E-01 &          & 0.238E-01  &       \\
& 20      &  0.222E-01 &   2.0586  &  0.717E-03 &   4.1823  &   0.799E-04 &   8.2188  \\
ext 3& 40      &   0.554E-02 &   2.0011   &    0.220E-04 &   5.0283 &  0.101E-05 &   6.3010   \\
 & 80     &   0.139E-02 &   1.9973  & 0.111E-05 &   4.3051   &  0.313E-08 &   8.3370 \\
& 160     &  0.347E-03 &   1.9989 &  0.673E-07 &   4.0461  &0.139E-10 &   7.8181 \\
& 320    &0.868E-04 &   1.9997  &  0.419E-08 &   4.0058 &0.145E-12 &   6.5840   \\
\hline
\end{tabular}
\end{center}
\caption{Absolute errors and approximation rates
for $\cK_\l f(0.3,0.3,0)$ using $\cK_{\l,h}^{(M)} f(0.3,0.3,0)$ with the density $f$ given in \eqref{product} with $u(x)=\cos^2(\pi x/2)$ and  different extensions, $M=1,2,3$, $\lambda^2=1$ and $\lambda^2=1+i$.}
\label{table1}
\end{scriptsize}
\end{table}

\begin{table}[h]
\begin{scriptsize}
\begin{center}
\begin{tabular}{c|c|cc|cc|cc}
\multicolumn{1}{c}{$\lambda^2=1$:}&  \multicolumn{7}{c}{}\\
&& \multicolumn{2}{c|}{$M=1$}&\multicolumn{2}{c|}{$M=2$}&\multicolumn{2}{c}{$M=3$}\\
\hline
$\widetilde{f}(\bdx)$ &$h^{-1}$& error & rate&  error & rate & error & rate\\
\hline
& 10      & 0.673E-01   &           &0.626E-02   &          &  0.427E-04&     \\
& 20      & 0.159E-01 &   2.0819     &0.392E-03 &   3.9965  &  0.668E-06 &   5.9997    \\
$f(\bdx)$& 40      & 0.391E-02 &   2.0238    &0.246E-04 &   3.9970&  0.104E-07 &   6.0000 \\
 & 80     &   0.973E-03 &   2.0062   & 0.154E-05 &   3.9991 & 0.163E-09 &   6.0000   \\
 & 160     &  0.243E-03 &   2.0016      & 0.960E-07 &   3.9998&   0.255E-11 &   6.0000  \\
 & 320    &  0.607E-04 &   2.0004     & 0.600E-08 &   3.9999&   0.398E-13 &   6.0002    \\
\hline
& 10      &0.637E-01    &           &0.634E-02   &          & 0.427E-04 &      \\
& 20      &  0.157E-01 &   2.0254  & 0.393E-03 &   4.0094  &  0.668E-06 &   5.9997    \\
ext 1& 40      & 0.389E-02 &   2.0075  & 0.246E-04 &   4.0003& 0.104E-07 &   5.9995   \\
 & 80     &0.972E-03 &   2.0020   & 0.154E-05 &   3.9999 &  0.156E-09 &   6.0635  \\
 & 160     & 0.243E-03 &   2.0005      &  0.961E-07 &   4.0000  &   0.389E-11 &   5.3255    \\
 & 320    & 0.607E-04 &   2.0001  &   0.600E-08 &   4.0000  &   0.603E-12 &   2.6899   \\
 \hline
& 10      &0.603E-01 &           &  0.644E-02 &          &  0.427E-04&          \\
& 20      &0.154E-01 &   1.9662 &0.395E-03 &   4.0264 &  0.668E-06 &   5.9997   \\
ext 2& 40      & 0.388E-02 &   1.9925 & 0.246E-04 &   4.0052 &  0.104E-07 &   6.0003  \\
 & 80     & 0.971E-03 &   1.9983 & 0.154E-05 &   4.0012 &  0.163E-09 &   6.0019\\
 & 160     & 0.243E-03 &   1.9996 & 0.961E-07 &   4.0003 &   0.224E-11 &   6.1838 \\
 & 320    &0.607E-04 &   1.9999   & 0.600E-08 &   4.0001&   0.408E-12 &   2.4557    \\
\hline
& 10      &0.291E-01    &       &0.626E-02  &       &  0.427E-04      &    \\
& 20      & 0.133E-01 &   1.1335  & 0.392E-03 &   3.9965  &  0.668E-06 &   5.9997  \\
ext 3& 40      &0.374E-02 &   1.8264 & 0.246E-04 &   3.9970& 0.104E-07 &   6.0000    \\
 & 80     & 0.963E-03 &   1.9586   & 0.154E-05 &   3.9991  & 0.163E-09 &   6.0000   \\
 & 160     &  0.224E-03 &   1.9894    & 0.960E-07 &   3.9998  &   0.255E-11 &   6.0000   \\
 & 320    & 0.607E-04 &   1.9975 & 0.600E-08 &   3.9999  &  0.398E-13 &   6.0001    \\
 \hline
\end{tabular}
\medskip

\begin{tabular}{c|c|cc|cc|cc}
\multicolumn{1}{c}{$\lambda^2=1+i$:}& \multicolumn{7}{c}{}\\
&& \multicolumn{2}{c|}{$M=1$}&\multicolumn{2}{c|}{$M=2$}&\multicolumn{2}{c}{$M=3$}\\
\hline
$\widetilde{f}(\bdx)$ &$h^{-1}$& error & rate&  error & rate & error & rate\\
\hline
& 10     &0.604E-01    &           &0.572E-02   &          &  0.441E-04 &       \\
& 20    &    0.142E-01 &   2.0834          &0.358E-03 &   3.9963         & 0.690E-06 &   5.9997 \\
$f(\bdx)$& 40 &    0.350E-02 &   2.0242             &    0.224E-04 &   3.9969         &  0.108E-07 &   6.0000    \\
 & 80  &  0.872E-03 &   2.0062        &  0.140E-05 &   3.9991  &  0.168E-09 &   6.0000  \\
& 160  &  0.218E-03 &   2.0016        &0.878E-07 &   3.9998       &  0.263E-11 &   6.0000  \\
& 320   &  0.544E-04 &   2.0004         &0.548E-08 &   3.9999    &  0.410E-13 &   6.0025  \\
\hline
& 10   &0.572E-01    &           &0.579E-02 &          & 0.441E-04 &      \\
& 20  &0.140E-01 &   2.0271&  0.360E-03 &   4.0096   &   0.690E-06 &   5.9997  \\
ext 1& 40       &    0.349E-02 &   2.0080          & 0.225E-04 &   4.0004&  0.108E-07 &   5.9996   \\
 & 80     & 0.871E-03 &   2.0021         & 0.140E-05 &   4.0000 &  0.163E-09 &   6.0465  \\
& 160       &   0.218E-03 &   2.0006          &    0.878E-07 &   4.0000&  0.372E-11 &   5.4561 \\
& 320    &  0.544E-04 &   2.0001           & 0.548E-08 &   4.0000 &  0.539E-12 &   2.7853    \\
\hline
& 10         &0.542E-01    &           &0.589E-02   &          & 0.441E-04 &  \\
& 20       &   0.138E-01 &   1.9681           & 0.361E-03 &   4.0272          &  0.690E-06 &   5.9997   \\
ext 2& 40     &  0.348E-02 &   1.9931          & 0.225E-04 &   4.0055 &  0.108E-07 &   6.0002   \\
 & 80      &  0.870E-03 &   1.9984          & 0.140E-05 &   4.0013 & 0.168E-09 &   6.0014  \\
& 160     &  0.218E-03 &   1.9996          &  0.878E-07 &   4.0003 &  0.240E-11 &   6.1310   \\
& 320    &  0.544E-04 &   1.9999          & 0.548E-08 &   4.0001 &  0.365E-12 &   2.7174   \\
\hline
& 10   & 0.261E-01           &   &    0.803E-02 & &  0.441E-04&      \\
& 20   &     0.119E-01 &   1.1338       &0.560E-03 &   3.8421  &  0.690E-06 &   5.9997   \\
ext 3& 40   &0.335E-02 &   1.8275  &  0.268E-04 &   4.3875&  0.108E-07 &   6.0000  \\
 & 80  &0.862E-03 &   1.9590   &    0.148E-05 &   4.1767& 0.168E-09 &   6.0000   \\
& 160   &    0.217E-03 &   1.9899          & 0.890E-07 &   4.0553 &  0.263E-11 &   6.0000 \\
& 320  &     0.544E-04 &   1.9975   & 0.550E-08 &   4.0151     &  0.410E-13 &   6.0030   \\
\hline
\end{tabular}
\end{center}
\caption{Absolute errors and approximation rates
for $\cK_\l f(0.5,0.5,0.5)$ using $\cK_{\l,h}^{(M)} f(0.5,0.5,0.5)$ with the density $f$ given in \eqref{product} with  $u(x)=(x^2-1)^3$ and  different extensions,  $M=1,2,3$, $\lambda^2=1$ and $\lambda^2=1+i$.}
\label{table2}
\end{scriptsize}
\end{table}

\begin{table}[h]
\begin{scriptsize}
\begin{center}
\begin{tabular}{c|c|cc|cc|cc}
\multicolumn{1}{c}{$\lambda^2=1$:}&  \multicolumn{6}{c}{}\\
&& \multicolumn{2}{c|}{$M=1$}&\multicolumn{2}{c|}{$M=2$}&\multicolumn{2}{c}{$M=3$}\\
\hline
$\widetilde{f}(\bdx)$ &$h^{-1}$& error & rate&  error & rate & error & rate
\\
\hline
& 10      &    0.935E-01&           &  0.166E-02 &          &0.222E-15  &          \\
& 20      &   0.241E-01 &   1.9564     & 0.104E-03 &   3.9984    &  0.777E-15&     \\
$f(\bdx)$& 40      &   0.607E-02 &   1.9883     &0.647E-05 &   3.9999 &0.111E-15  &     \\
 & 80     & 0.152E-02 &   1.9970  & 0.405E-06 &   4.0000  &  0.555E-16&     \\
& 160     &    0.380E-03 &   1.9993  & 0.253E-07 &   4.0000  &  0.555E-16 &     \\
& 320    & 0.951E-04 &   1.9998&  0.158E-08 &   4.0000  &  0.222E-15 &      \\
\hline
& 10      &  0.941E-01   &           & 0.166E-02  &          & 0.779E-10 &        \\
& 20      & 0.241E-01 &   1.9632   & 0.104E-03 &   3.9984  &  0.336E-10 &   1.2133 \\
ext 1& 40      &  0.607E-02 &   1.9903 & 0.647E-05 &   3.9999  & 0.143E-10 &   1.2318   \\
 & 80     & 0.152E-02 &   1.9975& 0.405E-06 &   4.0000&    0.628E-11 &   1.1873  \\
& 160     &0.380E-03 &   1.9994  & 0.253E-07 &   4.0000   &  0.160E-12 &   5.2966  \\
& 320    & 0.951E-04 &   1.9998&  0.158E-08 &   4.0000  &  0.268E-12 &  -0.7471  \\
\hline
& 10      &   0.946E-01   &           & 0.166E-02   &          & 0.201E-11 &      \\
& 20      &  0.242E-01 &   1.9684    & 0.104E-03 &   3.9984  &  0.133E-11 &   0.5895\\
ext 2& 40      & 0.607E-02 &   1.9920  & 0.647E-05 &   3.9999   & 0.139E-11 &  -0.0557   \\
 & 80     &  0.152E-02 &   1.9980 &  0.405E-06 &   4.0000   & 0.641E-13 &   4.4336  \\
& 160     & 0.380E-03 &   1.9995 &0.253E-07 &   4.0000& 0.532E-12 &  -3.0524 \\
& 320    & 0.951E-04 &   1.9999&   0.158E-08 &   4.0000 & 0.404E-12 &   0.3980 \\
\hline
& 10      & 0.983E-01   &           & 0.166E-02 &          &  0.222E-15 &       \\
& 20      & 0.245E-01 &   2.0041  & 0.104E-03 &   3.9984     & 0.722E-15 &   \\
ext 3& 40      & 0.610E-02 &   2.0066& 0.647E-05 &   3.9999  &  0.111E-15 &    \\
 & 80     &0.152E-02 &   2.0022&   0.405E-06 &   4.0000& 0.555E-16 & \\
& 160     & 0.380E-03 &   2.0006&0.253E-07 &   4.0000  &0.555E-16   &  \\
& 320    & 0.951E-04 &   2.0002&    0.158E-08 &   4.0000   &  0.111E-15 &     \\
\hline
\end{tabular}
\medskip

\begin{tabular}{c|c|cc|cc|cc}
\multicolumn{1}{c}{$\lambda^2=1+i$:}&  \multicolumn{6}{c}{}\\
&& \multicolumn{2}{c|}{$M=1$}&\multicolumn{2}{c|}{$M=2$}&\multicolumn{2}{c}{$M=3$}\\
\hline
$\widetilde{f}(\bdx)$ &$h^{-1}$& error & rate&  error & rate & error & rate
\\
\hline
& 10      &  0.869E-01  &           & 0.168E-02   &          & 0.220E-14  &          \\
& 20      & 0.224E-01 &   1.9541     & 0.105E-03 &   3.9983    &0.729E-15  &     \\
$f(\bdx)$& 40      & 0.565E-02 &   1.9878     &   0.655E-05 &   3.9999  &0.397E-15  &     \\
 & 80     &     0.142E-02 &   1.9969   & 0.410E-06 &   4.0000 &  0.555E-16&     \\
& 160     &  0.354E-03 &   1.9992     &  0.256E-07 &   4.0000  &  0.128E-15 &     \\
& 320    &  0.886E-04 &   1.9998 &    0.160E-08 &   4.0000    &0.906E-16   &      \\
\hline
& 10      &  0.875E-01  &           &  0.168E-02  &          & 0.695E-10 &        \\
& 20      &  0.225E-01 &   1.9617    &    0.105E-03 &   3.9983 &   0.301E-10 &   1.2058     \\
ext 1& 40      &0.566E-02 &   1.9899     &  0.655E-05 &   3.9999 & 0.128E-10 &   1.2359   \\
 & 80     &  0.142E-02 &   1.9974&  0.410E-06 &   4.0000 &  0.563E-11 &   1.1851 \\
& 160     &  0.354E-03 &   1.9994 & 0.256E-07 &   4.0000  &  0.144E-12 &   5.2918 \\
& 320    &  0.886E-04 &   1.9998   &   0.160E-08 &   4.0000& 0.240E-12 &  -0.7425\\
\hline
& 10      &   0.880E-01   &           &0.168E-02   &          & 0.179E-11 &      \\
& 20      &    0.225E-01 &   1.9675  & 0.105E-03 &   3.9983  &  0.119E-11 &   0.5930   \\
ext 2& 40      &  0.566E-02 &   1.9917 &  0.655E-05 &   3.9999 &  0.124E-11 &  -0.0642\\
 & 80     &   0.142E-02 &   1.9979 &  0.410E-06 &   4.0000&0.577E-13 &   4.4276 \\
& 160     & 0.354E-03 &   1.9995  &    0.256E-07 &   4.0000  & 0.476E-12 &  -3.0451 \\
& 320    &0.886E-04 &   1.9999   &   0.160E-08 &   4.0000  & 0.361E-12 &   0.3971  \\
\hline
& 10      &   0.918E-01 &           &0.168E-02   &          & 0.215E-14 &       \\
& 20      &    0.228E-01 &   2.0074& 0.105E-03 &   3.9983 &   0.625E-15 &     \\
ext 3& 40      &  0.568E-02 &   2.0074     &  0.655E-05 &   3.9999& 0.296E-15  &    \\
 & 80     & 0.142E-02 &   2.0024  &  0.410E-06 &   4.0000&  0.706E-17 &   \\
& 160     &   0.354E-03 &   2.0006   & 0.256E-07 &   4.0000  & 0.794E-16 &   \\
& 320    &  0.886E-04 &   2.0002  &  0.160E-08 &   4.0000  &   0.119E-15 &     \\
\hline
\end{tabular}
\end{center}
\caption{Absolute errors and approximation rates
for $\cK_\l f(0.4,0.5,0)$ using $\cK_{\l,h}^{(M)} f(0.4,0.5,0)$ with the density $f$ given in \eqref{product} with $u(x)=(1-x^2)^2$ and  different extensions,  with $M=1,2,3$,  $\lambda^2=1$ and $\lambda^2=1+i$.}
\label{table3}
\end{scriptsize}
\end{table}

 \begin{table}[h]
\begin{scriptsize}
\begin{center}
\begin{tabular}{c|r|cc|cc|cc|cc} \hline
         $\widetilde{f}(\bdx)$&    $n$& \multicolumn{2}{c|}{10}&\multicolumn{2}{c|}{$10^2$}& \multicolumn{2}{c|}{$10^3$}& \multicolumn{2}{c}{$10^4$}  \\ \hline
&$h^{-1}$&  error  & rate         &         error     &         rate      &     error       &       rate &     error       &       rate     \\ \hline
&$10$& 0.338E-03&  & 0.459E-02&&0.487E-01&&0.703E{\tiny +}00& \\
&$20  $&   0.605E-05 &   5.8020 &  0.732E-04 &   5.9727   &0.746E-03 &   6.0282  & 0.751E-02 &   6.5491\\
&$40  $&  0.976E-07 &   5.9541   & 0.115E-05 &   5.9966    &  0.117E-04 &   5.9991&0.117E-03 &   6.0070 \\
$f(\bdx)$&$80  $& 0.154E-08 &   5.9887 & 0.179E-07 &   5.9994    & 0.182E-06 &   5.9999 &0.183E-05 &   6.0000 \\
&$160  $& 0.241E-10 &   5.9971  & 0.280E-09 &   6.0013 &0.285E-08 &   6.0000  & 0.285E-07 &   5.9999 \\
&$320 $&   0.376E-12 &   5.9982  &  0.513E-11 &   5.7677  &  0.445E-10 &   6.0005  & 0.446E-09 &   5.9985  \\
\hline
\end{tabular}\\[0.5mm]

\begin{tabular}{c|r|cc|cc|cc|cc} \hline
  $\widetilde{f}(\bdx)$&           $n$& \multicolumn{2}{c|}{$10^5$}&\multicolumn{2}{c|}{$10^6$}& \multicolumn{2}{c|}{$10^7$}& \multicolumn{2}{c}{$10^8$}  \\ \hline
&$h^{-1}$&  error  & rate         &         error     &         rate      &     error       &       rate &     error       &       rate     \\ \hline
&$20  $&0.794E-01&  &0.145E{\tiny +}01  &  &  &  &\\
&$40  $&  0.117E-02 &   6.0852  & 0.118E-01 &   6.9443  & 0.129E{\tiny +}00 &  &0.348E{\tiny +}01  & \\
$f(\bdx)$&$80  $&   0.183E-04 &   6.0012&  0.183E-03 &   6.0133 & 0.183E-02 &   6.1364  &0.185E-01 &   7.5527 \\
&$160  $&   0.285E-06 &   5.9992  &  0.286E-05 &   5.9975 &  0.286E-04 &   5.9985  & 0.286E-03 &   6.0174 \\
&$320 $& 0.451E-08 &   5.9842  &0.478E-07 &   5.9030  & 0.510E-06 &   5.8096  & 0.517E-05 &   5.7889 \\
\hline
\end{tabular}\\[5mm]

\begin{tabular}{c|r|cc|cc|cc|cc} \hline
         $\widetilde{f}(\bdx)$&    $n$& \multicolumn{2}{c|}{10}&\multicolumn{2}{c|}{$10^2$}& \multicolumn{2}{c|}{$10^3$}& \multicolumn{2}{c}{$10^4$}  \\ \hline
&$h^{-1}$&  error  & rate         &         error     &         rate      &     error       &       rate &     error       &       rate     \\ \hline
&$10$&     0.352E-03& &0.459E-02&& 0.487E-01&&0.703E{\tiny +}00                           \\
&$20  $&  0.611E-05 &   5.8476  & 0.732E-04 &   5.9726  &0.746E-03 &   6.0282  & 0.751E-02 &   6.5491 \\
ext&$40  $&  0.978E-07 &   5.9652  &  0.115E-05 &   5.9966&0.117E-04 &   5.9991  & 0.117E-03 &   6.0070 \\
$1$&$80  $&  0.154E-08 &   5.9892  &  0.179E-07 &   5.9994 & 0.182E-06 &   5.9999  & 0.183E-05 &   6.0000 \\
&$160  $&  0.230E-10 &   6.0635  & 0.280E-09 &   6.0013  & 0.285E-08 &   6.0000  & 0.285E-07 &   5.9999 \\
&$320 $&   0.650E-12 &   5.1472  &  0.513E-11 &  5.7677  & 0.445E-10 &   6.0005  &  0.446E-09 &   5.9985 \\
\hline
\end{tabular}\\[0.5mm]

\begin{tabular}{c|r|cc|cc|cc|cc} \hline
  $\widetilde{f}(\bdx)$&           $n$& \multicolumn{2}{c|}{$10^5$}&\multicolumn{2}{c|}{$10^6$}& \multicolumn{2}{c|}{$10^7$}& \multicolumn{2}{c}{$10^8$}  \\ \hline
&$h^{-1}$&  error  & rate         &         error     &         rate      &     error       &       rate &     error       &       rate     \\ \hline
&$20$&     0.794E-01&&  0.145E{\tiny +}01&&            &&                   \\
ext&$40  $& 0.117E-02 &   6.0852  & 0.118E-01 &   6.9443 & 0.129E{\tiny +}00  &  &0.348E{\tiny +}01  & \\
$1$&$80  $& 0.183E-04 &   6.0012  & 0.183E-03 &   6.0133  & 0.183E-02 &   6.1364& 0.185E-01 &   7.5527 \\
&$160  $& 0.285E-06 &   5.9992 &  0.286E-05 &   5.9975  & 0.286E-04 &   5.9985 &0.286E-03 &   6.0174 \\
&$320 $& 0.451E-08 &   5.9842  & 0.478E-07 &   5.9030  &0.510E-06 &   5.8096 & 0.517E-05 &   5.7889 \\
\hline
\end{tabular}\\[5mm]

\begin{tabular}{c|r|cc|cc|cc|cc} \hline
         $\widetilde{f}(\bdx)$&    $n$& \multicolumn{2}{c|}{10}&\multicolumn{2}{c|}{$10^2$}& \multicolumn{2}{c|}{$10^3$}& \multicolumn{2}{c}{$10^4$}  \\ \hline
&$h^{-1}$&  error  & rate         &         error     &         rate      &     error       &       rate &     error       &       rate     \\ \hline
&$10$& 0.415E-03&&0.459E-02&& 0.487E-01&& 0.703E{\tiny +}00\\
&$20  $& 0.632E-05 &   6.0374  &  0.732E-04 &   5.9727   & 0.746E-03 &   6.0282&  0.751E-02 &   6.5491 \\
ext&$40  $&  0.985E-07 &   6.0037   &  0.115E-05 &   5.9966  &  0.117E-04 &   5.9991 &0.117E-03 &   6.0070  \\
$2$&$80  $&0.154E-08 &   5.9994 &   0.179E-07 &   5.9994  &   0.182E-06 &   5.9999   & 0.183E-05 &   6.0000 \\
&$160  $& 0.241E-10 &   5.9999  &  0.280E-09 &   6.0013  &0.285E-08 &   6.0000&0.285E-07 &   5.9999  \\
&$320 $&  0.408E-12 &   5.8832  & 0.513E-11 & 5.7677  &   0.445E-10 &   6.0005 &0.446E-09 &   5.9985  \\
\hline
\end{tabular}\\[0.5mm]

\begin{tabular}{c|r|cc|cc|cc|cc} \hline
  $\widetilde{f}(\bdx)$&           $n$& \multicolumn{2}{c|}{$10^5$}&\multicolumn{2}{c|}{$10^6$}& \multicolumn{2}{c|}{$10^7$}& \multicolumn{2}{c}{$10^8$}  \\ \hline
&$h^{-1}$&  error  & rate         &         error     &         rate      &     error       &       rate &     error       &       rate     \\ \hline
&$20$& 0.794E-01&&0.145E{\tiny +}01 && &&\\
ext&$40  $&   0.117E-02 &   6.0852  &   0.118E-01 &   6.9443  &0.129E{\tiny +}00  &  & 0.348E{\tiny +}01  & \\
$2$&$80  $& 0.183E-04 &   6.0012   &   0.183E-03 &   6.0133 &  0.183E-02 &   6.1364  & 0.185E-01 &   7.5527 \\
&$160  $& 0.285E-06 &   5.9992  & 0.286E-05 &   5.9975  & 0.286E-04 &   5.9985& 0.286E-03 &   6.0174 \\
&$320 $&  0.451E-08 &   5.9842   & 0.478E-07 &   5.9030  & 0.510E-06 &   5.8096  & 0.517E-05 &   5.7889 \\
\hline
\end{tabular}\\[2mm]

\caption{ Absolute errors and approximation rates
for $\cK_\lambda f(0.5,0...,0)$ using $\cK_{\lambda,h}^{(3)} f(0.5,0,...,0)$ with the density $f$ given in \eqref{product} with $u(x)=1-\sin(\pi x^2/2)$ and different extensions , $n=10^i$, $i=1,...,8$, $\lambda^2=1$.}\label{table4}
\end{center}
\end{scriptsize}
\end{table}

 \begin{table}[h]
\begin{scriptsize}
\begin{center}
\begin{tabular}{c|r|cc|cc|cc|cc} \hline
         $\widetilde{f}(\bdx)$&    $n$& \multicolumn{2}{c|}{10}&\multicolumn{2}{c|}{$10^2$}& \multicolumn{2}{c|}{$10^3$}& \multicolumn{2}{c}{$10^4$}  \\ \hline
&$h^{-1}$&  error  & rate         &         error     &         rate      &     error       &       rate &     error       &       rate     \\ \hline
&$10  $&0.699E-03  &  &0.596E-02  &  & 0.595E-01 &  & 0.759E{\tiny +}00  & \\
&$20  $& 0.106E-04 &   6.0400 &  0.902E-04 &   6.0453   &  0.880E-03 &   6.0792  & 0.881E-02 &   6.4288 \\
&$40  $& 0.165E-06 &   6.0100 &0.140E-05 &   6.0105 &0.136E-04 &   6.0111 &0.136E-03 &   6.0162 \\
$f(\bdx)$&$80  $&0.257E-08 &   6.0025 & 0.218E-07 &   6.0026&0.213E-06 &   6.0026 & 0.212E-05 &   6.0027\\
&$160  $& 0.402E-10 &   6.0005&0.341E-09 &   6.0017& 0.332E-08 &   6.0006  &  0.332E-07 &   6.0005 \\
&$320 $&0.632E-12 &   5.9909& 0.491E-11 &   6.1156& 0.585E-10 &   5.9998  &0.519E-09 &   5.9973\\
\hline
\end{tabular}\\[0.5mm]

\begin{tabular}{c|r|cc|cc|cc|cc} \hline
  $\widetilde{f}(\bdx)$&           $n$& \multicolumn{2}{c|}{$10^5$}&\multicolumn{2}{c|}{$10^6$}& \multicolumn{2}{c|}{$10^7$}& \multicolumn{2}{c}{$10^8$}  \\ \hline
&$h^{-1}$&  error  & rate         &         error     &         rate      &     error       &       rate &     error       &       rate     \\ \hline
&$20  $& 0.913E-01 &    &0.134E{\tiny +}01  &  &  &  &  & \\
&$40  $&    0.136E-02 &   6.0671  & 0.137E-01 &   6.6101 &0.145E{\tiny +}00 &    &0.267E{\tiny +}01 \\
$f(\bdx)$&$80  $&   0.212E-04 &   6.0035&0.212E-03 &   6.0113 &  0.212E-02 &   6.0906  &  0.214E-01 &   6.9639 \\
&$160  $&  0.332E-06 &   5.9994  & 0.332E-05 &   5.9966&   0.333E-04 &   5.9966& 0.333E-03 &   6.0087 \\
&$320 $&0.526E-08 &   5.9779&  0.572E-07 &   5.8594  & 0.632E-06 &   5.7186  &  0.646E-05 &   5.6865 \\
\hline
\end{tabular}\\[5mm]

\begin{tabular}{c|r|cc|cc|cc|cc} \hline
         $\widetilde{f}(\bdx)$&    $n$& \multicolumn{2}{c|}{10}&\multicolumn{2}{c|}{$10^2$}& \multicolumn{2}{c|}{$10^3$}& \multicolumn{2}{c}{$10^4$}  \\ \hline
&$h^{-1}$&  error  & rate         &         error     &         rate      &     error       &       rate &     error       &       rate     \\ \hline
&$10  $& 0.690E-03 &  &0.596E-02  &  & 0.595E-01   &  &0.759E{\tiny +}00  & \\
&$20  $&0.106E-04 &   6.0254 & 0.902E-04 &   6.0453  & 0.880E-03 &   6.0792  &  0.881E-02 &   6.4288 \\
ext&$40  $& 0.165E-06 &   6.0068  &  0.140E-05 &   6.0105  &  0.136E-04 &   6.0111  &  0.136E-03 &   6.0162 \\
$2$&$80  $&  0.257E-08 &   6.0019  &  0.218E-07 &   6.0026  & 0.213E-06 &   6.0026  & 0.212E-05 &   6.0027 \\
&$160  $& 0.401E-10 &   6.0046  & 0.341E-09 &   6.0017  & 0.332E-08 &   6.0006  & 0.332E-07 &   6.0005 \\
&$320 $& 0.676E-12 &   5.8884 & 0.491E-11 &   6.1156  &0.519E-10 & 5.9998 &0.519E-09 &   5.9973  \\
\hline
\end{tabular}\\[0.5mm]

\begin{tabular}{c|r|cc|cc|cc|cc} \hline
  $\widetilde{f}(\bdx)$&           $n$& \multicolumn{2}{c|}{$10^5$}&\multicolumn{2}{c|}{$10^6$}& \multicolumn{2}{c|}{$10^7$}& \multicolumn{2}{c}{$10^8$}  \\ \hline
&$h^{-1}$&  error  & rate         &         error     &         rate      &     error       &       rate &     error       &       rate     \\ \hline
&$20  $&  0.913E-01&  & 0.134E{\tiny +}01   &  &  &  &  & \\
ext&$40  $&  0.136E-02 &   6.0671  & 0.137E-01 &   6.6101 &  0.145E{\tiny +}00  &&0.267E{\tiny +}01 \\
$2$&$80  $&0.212E-04 &   6.0035&  0.212E-03 &   6.0113 &0.212E-02 &   6.0906  &0.214E-01 &   6.9639 \\
&$160  $& 0.332E-06 &   5.9994&   0.332E-05 &   5.9966  &  0.333E-04 &   5.9966  &  0.333E-03 &   6.0087 \\
&$320 $&  0.526E-08 &   5.9779  &  0.572E-07 &   5.8594   &  0.632E-06 &   5.7186  &0.646E-05 &   5.6865 \\
\hline
\end{tabular}\\[5mm]

\begin{tabular}{c|r|cc|cc|cc|cc} \hline
         $\widetilde{f}(\bdx)$&    $n$& \multicolumn{2}{c|}{10}&\multicolumn{2}{c|}{$10^2$}& \multicolumn{2}{c|}{$10^3$}& \multicolumn{2}{c}{$10^4$}  \\ \hline
&$h^{-1}$&  error  & rate         &         error     &         rate      &     error       &       rate &     error       &       rate     \\ \hline
&$10  $&   0.156E-01 &  &0.590E-02  &  &0.595E-01  &  & 0.759E{\tiny +}00 & \\
&$20  $& 0.165E-04 &   9.8811  &  0.901E-04 &   6.0349  &   0.880E-03 &   6.0791  &  0.881E-02 &   6.4288 \\
ext&$40  $&0.943E-07 &   7.4538  &  0.140E-05 &   6.0091  &  0.136E-04 &   6.0111  &0.136E-03 &   6.0162 \\
$3$&$80  $& 0.110E-08 &   6.4188  & 0.218E-07 &   6.0021 &0.213E-06 &   6.0026  &0.212E-05 &   6.0027 \\
&$160  $&0.333E-10 &   5.0496  & 0.340E-09 &   6.0016  &0.332E-08 &   6.0006 & 0.332E-07 &   6.0005 \\
&$320 $&0.602E-12 &   5.7901  &0.491E-11 &   6.1156& 0.519E-10 &5.9998&0.519E-09 &   5.9973 \\
\hline
\end{tabular}\\[0.5mm]

\begin{tabular}{c|r|cc|cc|cc|cc} \hline
  $\widetilde{f}(\bdx)$&           $n$& \multicolumn{2}{c|}{$10^5$}&\multicolumn{2}{c|}{$10^6$}& \multicolumn{2}{c|}{$10^7$}& \multicolumn{2}{c}{$10^8$}  \\ \hline
&$h^{-1}$&  error  & rate         &         error     &         rate      &     error       &       rate &     error       &       rate     \\ \hline
&$20  $&0.913E-01  &  &  0.134E{\tiny +}01  &  &  &  &  & \\
ext&$40  $&   0.136E-02 &   6.0671  &  0.137E-01 &   6.6101   &0.145E{\tiny +}00  &  &0.267E{\tiny +}01 \\
$3$&$80  $& 0.212E-04 &   6.0035 &  0.212E-03 &   6.0113  &  0.212E-02 &   6.0906  &  0.214E-01 &   6.9639 \\
&$160  $& 0.332E-06 &   5.9994  & 0.332E-05 &   5.9966  &0.333E-04 &   5.9966 &0.333E-03 &   6.0087 \\
&$320 $&   0.526E-08 &   5.9779  & 0.572E-07 &   5.8599  & 0.632E-06 &   5.7186  &  0.646E-05 &   5.6865 \\
\hline
\end{tabular}\\[2mm]

\caption{ Absolute errors and approximation rates
for $\cK_\lambda f(0.4,0.4,0,...,0)$ using $\cK_{\lambda,h}^{(3)} f(0.4,0.4,0,...,0)$ with the density $f$ given in \eqref{product} with $u(x)=\e^x (1-x^2)^2$ and different extensions , $n=10^i$, $i=1,...,8$, $\lambda^2=1$.}\label{table5}
\end{center}
\end{scriptsize}
\end{table}

\end{document}